\newtheorem{theorem}{Theorem}[section]
\newtheorem{lemma}[theorem]{Lemma}
\newtheorem{prop}[theorem]{Proposition}
\newtheorem{cor}[theorem]{Corollary} 
\theoremstyle{definition}
\newtheorem{definition}[theorem]{Definition}
\theoremstyle{remark}
\theoremstyle{question}
\newtheorem{question}[theorem]{Question}
\numberwithin{equation}{section}
\newcommand{\h}{\mathcal{H}}
\newcommand{\D}{\mathbb{D}}
\newcommand{\C}{\mathbb{C}}
\newcommand{\R}{\mathbb{R}}
\newcommand{\T}{\mathbb{T}}
\begin{document}

\title{Convexity of the Berezin Range}

\author{Carl C. Cowen}
\author{Christopher Felder}

\address{Department of Mathematical Sciences, Indiana University -- Purdue University, Indianapolis, Indianapolis, Indiana 46202-3216}
\email{ccowen@iupui.edu}

\address{Department of Mathematics and Statistics, Washington University In St. Louis, St. Louis, Missouri, 63136}
\email{cfelder@wustl.edu}

\subjclass[2010]{Primary 47B32, Secondary 52A10}
\date{\today}


\keywords{Berezin transform, Berezin range, Berezin set, convexity, composition operator}
\thanks{The second author was supported in part by NSF Grant \#1565243.}

\begin{abstract}
This paper discusses the convexity of the range of the Berezin transform. For a bounded operator $T$ acting on a reproducing kernel Hilbert space $\h$ (on a set $X$), this is the set $B(T) : = \{ \langle T\hat{k}_x, \hat{k}_x \rangle_\h : x \in X \}$, where $\hat{k}_x$ is the normalized reproducing kernel for $\h$ at $x \in X$. Primarily, we focus on characterizing convexity of this range for a class of composition operators acting on the Hardy space of the unit disk. 
\end{abstract}
\maketitle

\section{Introduction and Notation}\label{intro}
Throughout this paper we work in $\textit{reproducing kernel Hilbert space}$ (RKHS). These spaces are complete inner-product spaces comprised of complex-valued functions defined on a set $X$, where point evaluation is bounded. Formally, that is, if $X$ is a set and $\h$ is a subset of all functions $X \to \C$, then $\h$ is an RKHS on $X$ if it is a complete inner product space and point evaluation at each $x \in X$ is a bounded linear functional on $\h$. Via the Riesz representation theorem, we the know if $\h$ is an RKHS on $X$, there is a unique element $k_x \in \h$ such that $\langle f, k_x \rangle_{\h} = f(x)$ for each $x \in X$ and all $f \in \h$. The element $k_x$ is called the $\textit{reproducing kernel at x}$. Further, we will denote the normalized reproducing kernel at $x$ as $\hat{k}_x:= k_x/ \|k_x \|_\h$.
For more on the general theory of this topic, we point the reader to \cite{MR1882259} and \cite{MR3526117}. 
From here out, $\h$ is assumed to be an RKHS (on some set $X$).

For a bounded linear operator $T$ acting on $\h$, the \textit{Berezin range of $T$}
is 
\[
B(T) := \left\{ \langle T\hat{k}_x, \hat{k}_x \rangle  : x \in X \right\}. 
\]
The convexity of the Berezin range is the focus of this paper. After some discussion, we give characterizations for certain concrete operators to have convex Berezin range. Namely, we characterize convex Berezin range for matrices and multiplication operators in Section \ref{matrix_mult}, and composition operators with automorphic symbol (acting on $H^2$) in Section \ref{comp_ops}. We conclude with some open questions in Section \ref{questions}.


A well known example of an RKHS, and the main space in which we work here, is the classical Hilbert Hardy space on the unit disk $\D$,
\[
H^2 := \left\{ f(z) = \sum_{n\ge 0}a_n z^n \in \textrm{Hol}(\D) : \sum_{n\ge 0} |a_n|^2 < \infty \right\},
\]
where $\textrm{Hol}(\D)$ is the collection of holomorphic functions on $\D$. 
If $f(z) = \sum_{n\ge 0}a_nz^n$ and $g(z) = \sum_{n\ge 0}b_nz^n$ are elements of $H^2$, their inner product is given by 
\[
\langle f, g \rangle_{H^2} = \sum_{n \ge0}a_n \overline{b_n},
\]
with, of course, $\|f\|_{H^2}^2 = \sum_{n \ge 0} |a_n|^2$. 
One may check that the reproducing kernel for $H^2$, known as the Szeg\H{o} kernel, is given by
\[
k_w(z) = \frac{1}{1 - \overline{w}z} \ , \ \ z,w \in \D.
\] 
We choose to focus primarily on this space because it is well studied and its kernel is quite simple. However, another RKHS to keep in mind is the Bergman space on a complex domain, say, $\Omega \subset \C^n$, 
\[
A^2(\Omega) := \left\{ f \in \operatorname{Hol}(\Omega) : \int_\Omega |f(z)|^2 dV(z) < \infty \right\},
\]
where $dV$ is normalized volume measure on $\Omega$. 

Before moving on, let us collect some definitions. 

\begin{definition}
Let $\h$ be an RKHS on a set $X$ and let $T$ be a bounded linear operator on $\h$. 
\begin{enumerate}
\item For $x\in X$, the $\textit{Berezin transform}$ of $T$ at $x$ (or \textit{Berezin symbol of $T$}) is
\[
\tilde{T}(x) := \langle T\hat{k}_x, \hat{k}_x \rangle_\h.
\]
\item The \textit{Berezin range} of $T$ (or \textit{Berezin set of $T$}) is 
\[
B(T) := \left\{\langle T\hat{k}_x, \hat{k}_x \rangle_\h : x \in X \right\}.
\]
\item The \textit{Berezin radius of $T$} (or \textit{Berezin number of $T$}) is
\[
b(T) : = \sup_{x \in X} |\tilde{T}(x)|.
\]
\end{enumerate}
\end{definition}

We will now give some expository remarks to motivate our results, and to provide a highlight reel of the uses of the Berezin transform (the likes of which seem to be absent from existing literature). 

\section{Background and Motivation}\label{background}

The Berezin set and number, also denoted by $\operatorname{Ber}(T)$ and $\operatorname{ber}(T)$, respectively, were purportedly first formally introduced by Karaev in \cite{MR2253012}.
The Berezin transform itself was introduced by F. Berezin in \cite{MR0350504} and has proven to be a critical tool in operator theory, as many foundational properties of important operators are encoded in their Berezin transforms.

One of the first important results involving the Berezin transform involves the invertibility of Toeplitz operators acting on $H^2$. 
In \cite{MR0361894}, R.G. Douglas asked the following: if $\varphi \in L^{\infty}(\T)$ with $|\tilde{T_\varphi}| \ge \delta > 0$, is the Toeplitz operator $T_\varphi$ invertible? Tolokonnikov \cite{MR629839}, and then Wolff \cite{MR1979771}, showed that when $\delta$ is sufficiently close to 1, the answer is affirmative. The lower bound on $\delta$ was then improved by Nikolskii \cite{MR827223}. 
Much later, Karaev proved similar results for certain general operators acting on RKHS's:  if the modulus of the Berezin transform of a suitably nice operator is sufficiently bounded away from zero, then the invertibility of the operator is ensured \cite[Theorem~3.4]{MR2253012}. Similar recent results for closed range type properties of Toeplitz operators can be found in \cite{MR4025066}. 

Following Douglas' question, Berger and Coburn \cite{MR843308} asked something similar: if the Berezin symbol of an operator on the Hardy or Bergman space vanishes on the boundary of the disk, must the operator be compact? This question was addressed by Nordgren and Rosenthal \cite{MR1320554}, where they presented several counterexamples. 
However, Nordgren and Rosenthal showed, on a so-called \textit{standard} RKHS, that if the Berezin symbols of all unitary equivalents of an operator vanish on the boundary, then the operator is compact. The counterexamples presented come in the form of composition operators, and motivate the study in Section \ref{comp_ops}.

Another important theorem here, due to Axler and Zheng \cite{MR1647896},
is that if $T$ is a finite sum of finite products of Toeplitz operators acting on the Bergman space of the unit disk, then $T$ is compact if and only if the Berezin transform of $T$ vanishes as it approaches the boundary of the disk. Shortly after, Engli\v{s} \cite{MR1682815} generalized this result to weighted Bergman spaces on bounded symmetric domains in several variables. Later, Su\'{a}rez \cite{MR2360608} proved an analogous result for \textit{any operator in the Toeplitz algebra} of the Bergman space on the unit ball. 
Some other Axler-Zheng type results for various spaces over several types of domains can be found in \cite{MR1937440, MR3008923, MR3743367, MR2943731, MR3116666, MR4078705}.

Some other quite interesting results involving applications of the Berezin transform include the  characterization of invertible operators which are unitary \cite{MR3088319}, characterizations of Schatten-von Neumann class membership \cite{MR3100410, MR1942546, MR2926635}, Beurling-Arveson type theorems for some RKHS's \cite{MR2442904}, a characterization of skew-symmetric operators \cite{MR4203306}, and the characterization of truncated Toeplitz operators with bounded symbols, along with descriptions of invariant subspaces of isometric composition operators \cite{MR3522132}. See also \cite{MR3079840}, which will be discussed in Section \ref{questions}. A detailed introduction to the Berezin transform of operators on spaces of analytic functions can be found in \cite{MR1457018}.

We note that the range of the Berezin transform has been studied from a function theoretic viewpoint, for example in work by Ahern, which establishes a Brown-Halmos type theorem for the Bergman space \cite{MR2085115} (see also \cite{MR3747963, MR2886615}). However, apart from some examples due to Karaev \cite[Section~2.1]{MR3079840}, it does not appear that the Berezin range has been studied from a set-theoretic or geometric viewpoint, as we will discuss here.

\subsection{Numerical Range and the Toeplitz-Hausdorff Theorem}

In an RKHS, the Berezin range of an operator $T$ is a subset of the numerical range of $T$,
\[
W(T) : = \left\{ \langle Tu, u \rangle : \|u \| = 1 \right\}.
\]
The numerical range of an operator has some interesting properties. For example, it is well known that the spectrum of an operator is contained in the closure of its numerical range. Further, the numerical range of an operator is always \textit{convex}-- this results is known as the Toeplitz-Hausdorff Theorem \cite{MR1544315, MR1544350}. For more background on the numerical range, we point the reader to \cite{MR1417493}. Much effort has gone into describing the geometry of the numerical range (e.g. see \cite{MR2378310, MR3932079}), but to the knowledge of the authors, there are only a handful of results describing the geometry of the Berezin range \cite[Section~2.1]{MR3079840}, none of which address convexity.

As the convexity of the numerical range is arguably its most enigmatic property, we are motivated to ask the main question addressed in this paper: 
\begin{adjustwidth}{.5cm}{.5cm}
\textit{Given a bounded operator $T$ acting on an RKHS $\h$, is $B(T)$ convex?  Conversely, if $B(T)$ is convex, what can be said of $T$?}
\end{adjustwidth}
This question was initially pointed out by Karaev \cite{MR3079840}, and we give answers for a few classes of concrete operators. In general, as we will see, the Berezin range of an operator is $\textit{not}$ convex. 
\begin{figure}[H]
\includegraphics[width=.82\textwidth]{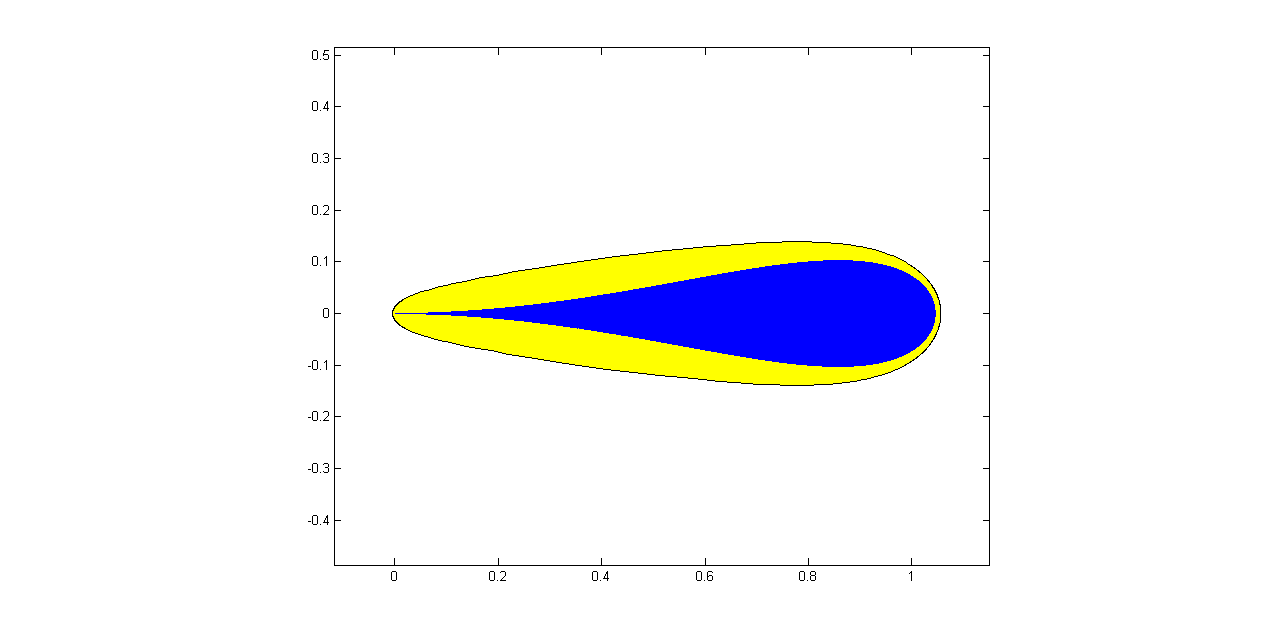}
\caption{The numerical and Berezin ranges of the composition operator with symbol $\frac14 (1 + z)^2$ acting on $H^2$, with the Berezin range appearing to be non-convex.}
\end{figure}
What fails in the Toeplitz-Hausdorff Theorem when restricting to normalized reproducing kernels? 
The proof of the theorem relies on the following result: \\[.2cm]
\noindent\textbf{Elliptical Range Theorem.}
\textit{Let $A$ be a $2\times2$ matrix with complex entries and eigenvalues $\lambda_1$ and $\lambda_2$. Then $W(A)$ is an elliptical disk with $\lambda_1$ and $\lambda_2$ as foci, and $\{\text{tr}(A^*A) - |\lambda_1|^2 - |\lambda_2|^2\}^{1/2}$ as its minor axis.} \\[.2cm]
There are several proofs of this theorem (e.g. \cite{MR96127, MR1322932}) but a common thread is to produce a linear combination of unit vectors, that is, again, a unit vector. In general, this is simply impossible to do for normalized reproducing kernels; for any two points $x_1$ and $x_2$ and constants $c_1$ and $c_2$, 
\[
\frac{c_1\hat{k}_{x_1} + c_2\hat{k}_{x_2}}{\|c_1\hat{k}_{x_1} + c_2\hat{k}_{x_2}\|}
\]
is not a normalized reproducing kernel. 
Another simple proof of the Toeplitz-Hausdorff theorem using an adaptation of the idea above can be found in \cite{MR262849}.


\section{Finite Dimensions and Multiplication Operators}\label{matrix_mult}
In this section, we will give characterizations for convexity of the Berezin transform of some easily understood classes of bounded linear transformations. 
\subsection{The Berezin range of matrices}
As a primer, let us consider the finite dimensional setting. Let $v = (v_1, \ldots, v_n) \in \C^n$ and $X = \{ 1, \ldots, n \}$. We can consider $\C^n$ as the set of all functions mapping $X \to \C$ by $v(j) = v_j$. Letting $e_j$ be the $j$th standard basis vector for $\C^n$ under the standard inner product, we can view $\C^n$ as an RKHS with kernel
\[
k(i,j) = \langle e_j, e_i \rangle.
\]
Note that $k_j = \hat{k}_j$ for each $j=1, \ldots, n$. For any complex $n\times n$ matrix $A = (a_{jk})_{j,k = 1}^n$, we have $\langle Ae_j, e_j \rangle = a_{jj}$. Thus, the Berezin range of $A$ is simply
\[
B(A) = \{ a_{jj} : j = 1, \ldots, n \},
\]
which is just the collection of diagonal elements of $A$. It is immediate that the only way this set can be convex is if the diagonal elements of $A$ are all equal: 
\begin{prop}
Let $A$ be an $n\times n$ matrix with complex entries. Under the standard inner product for $\C^n$, the Berezin range of $A$ is convex if and only if $A$ has constant diagonal. 
\end{prop}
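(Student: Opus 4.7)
The plan is to observe that the Berezin range here is just the finite set of diagonal entries of $A$, and then exploit the fact that convex subsets of $\C$ that are finite must be singletons.

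First I would recall the identification made in the paragraph preceding the statement: under the standard inner product, the normalized reproducing kernels at $1,\ldots,n$ are precisely the standard basis vectors $e_1,\ldots,e_n$, so $\tilde{A}(j) = \langle A e_j, e_j\rangle = a_{jj}$, giving
\[
B(A) = \{a_{11}, a_{22}, \ldots, a_{nn}\} \subset \C.
\]
This set has at most $n$ elements, in particular it is finite.

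For the reverse direction, if $a_{11} = a_{22} = \cdots = a_{nn} = c$, then $B(A) = \{c\}$, and a singleton is trivially convex.

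For the forward direction, I would argue by contradiction: suppose $B(A)$ is convex but two diagonal entries $a_{jj}, a_{kk}$ are distinct. Then by convexity $B(A)$ contains the entire closed line segment $\{(1-t)a_{jj} + t a_{kk} : t \in [0,1]\}$, which is an uncountable subset of $\C$. This contradicts the finiteness of $B(A)$ noted above. Hence all diagonal entries must coincide, and the proof is complete. I do not anticipate any obstacle here — the statement is essentially a direct consequence of the identification of $B(A)$ with the (finite) diagonal of $A$, together with the elementary fact that finite convex subsets of $\C$ are singletons.
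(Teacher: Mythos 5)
Your argument is correct and is essentially the paper's own: the paper identifies $B(A)$ with the (finite) set of diagonal entries and notes it is immediate that a finite subset of $\C$ is convex only if it is a singleton. You have simply written out that "immediate" step (a nondegenerate segment is uncountable) explicitly, which is fine.
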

This proposition shows that the geometry of the Berezin range of a matrix is remarkably simple compared to the numerical range of the matrix. We also point out that the trace of a matrix can be recovered as the sum over the elements of its Berezin range. More generally, it is known in some spaces (e.g. see \cite[Proposition~3.3]{MR2311536}) that when $T$ is a trace-class (or positive) operator, the trace of $T$ can be recovered using the Berezin transform.
We also point to the reader to \cite[Chapter~3]{MR2311536} for connections of the Berezin transform with the Fock space and BMO. 

As the Berezin transform is immediately understood in the finite dimensional setting, we quickly shift to infinite dimensional spaces.


\subsection{Multiplication Operators}\label{mult_ops}
Recall that for any Hilbert (or Banach) space of functions $\h$, 
\[
\operatorname{Mult}(\h) : = \{ g\in \h : gf \in \h \ \text{for all } f\in \h \}.
\]
For $g \in \operatorname{Mult}(\h)$, define the multiplication operator $M_g$ on $\h$ by $M_g f = gf$. 
\begin{prop}\label{mult_op_con}
Let $\h$ be an RKHS on a set $X$ and $g \in \text{Mult}(\h)$. Then the Berezin range of $M_g$ is convex if and only if $g(X)$ is convex. 
\end{prop}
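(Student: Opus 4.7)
The plan is to observe that the Berezin transform of a multiplication operator is just the symbol itself, so that $B(M_g) = g(X)$ as sets, and the equivalence becomes tautological.

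More concretely, the key computation I would carry out first is: for any $x \in X$,
\[
\widetilde{M_g}(x) = \langle M_g \hat{k}_x, \hat{k}_x \rangle_{\mathcal{H}} = \frac{1}{\|k_x\|^2} \langle g k_x, k_x \rangle_{\mathcal{H}} = \frac{(g k_x)(x)}{\|k_x\|^2} = \frac{g(x) k_x(x)}{\|k_x\|^2} = g(x),
\]
where the second-to-last equality uses the reproducing property and the last equality uses $k_x(x) = \langle k_x, k_x\rangle_{\mathcal{H}} = \|k_x\|^2$. The only thing being used beyond definitions is that the reproducing property applied to $g k_x \in \mathcal{H}$ (which lies in $\mathcal{H}$ because $g \in \operatorname{Mult}(\mathcal{H})$) yields its value at $x$.

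Once this identity is in hand, the definition of the Berezin range immediately gives
\[
B(M_g) = \{\widetilde{M_g}(x) : x \in X\} = \{g(x) : x \in X\} = g(X),
\]
and the proposition follows trivially: $B(M_g)$ is convex iff $g(X)$ is convex.

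I do not foresee any real obstacle here; the content of the proposition is essentially that multiplication operators have an especially clean Berezin transform. The only thing worth being careful about is justifying $g k_x \in \mathcal{H}$ so that the reproducing property applies, which is exactly the definition of $g \in \operatorname{Mult}(\mathcal{H})$.
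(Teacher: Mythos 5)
Your proof is correct and follows exactly the same route as the paper: compute $\widetilde{M_g}(x) = g(x)$ via the reproducing property, conclude $B(M_g) = g(X)$, and read off the equivalence. Your extra remark that $g k_x \in \mathcal{H}$ (so the reproducing property applies) is a sensible point of care that the paper leaves implicit.
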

\begin{proof}
Let $x \in X$ and observe that
\[
\tilde{M}_g(x) 
= \langle  g\hat{k}_x,  \hat{k}_x \rangle_{\h}
= \frac{1}{\| k_x\|_{\h}^2} \  \langle  g k_x,   k_x \rangle_{\h}
= \frac{1}{\| k_x\|_{\h}^2} \   g(x) k_x(x)
= g(x).
\]
Thus, $B(M_g) = g(X)$, and the result follows. 
\end{proof}

Similar to the matricial case, this characterization of convexity is exceptionally simple. 
One may think that convexity of the Berezin range could be simply understood. However, this is not generally the case. 
In order to demonstrate this, we move to some classes of operators acting on $H^2$ where the characterization of convexity becomes more technically involved.


\section{Composition Operators on $H^2$}\label{comp_ops}
A composition operator $C_\varphi$, induced by a complex-valued function $\varphi : X \to X$ (known as the \textit{symbol} of the operator, not to be confused with the Berezin symbol of the operator), acts on a space of functions defined on $X$ by 
\[
C_\varphi f := f \circ \varphi.
\]
These operators are beloved by many and have a long and important history in function and operator theory (e.g. see the monographs \cite{MR1397026, MR1237406}). 
One motivation for studying the Berezin range of these operators is that they often elude Axler-Zheng type results; e.g. there are composition operators such that 
$\tilde{C}_\varphi(z) \to 0$ as $z \to \partial X$, but $C_\varphi$ is \textit{not compact} (see \cite[Theorem~2.3]{MR1320554}).

\subsection{Eliptic Symbols}
We begin by considering a very elementary class of composition operators acting on $H^2$. For $\zeta \in \T$ (the complex unit circle), consider the elliptic automorphism of the disk
\[
\varphi(z)= \zeta z.
\]
Acting on $H^2$, these operators have Berezin transform
\begin{align*}
\tilde{C}_\varphi(z) &= \langle C_{\varphi} \hat{k}_z, \hat{k}_z \rangle \\
&= \left(1 - |z|^2\right)\langle C_{\varphi} k_z, k_z \rangle\\
&= \frac{1 - |z|^2}{1 - |z|^2\zeta}.
\end{align*}

With a little work, we come to a characterization of the convexity of $B(C_\varphi)$.
\begin{figure}[H]
\includegraphics[scale=0.3]{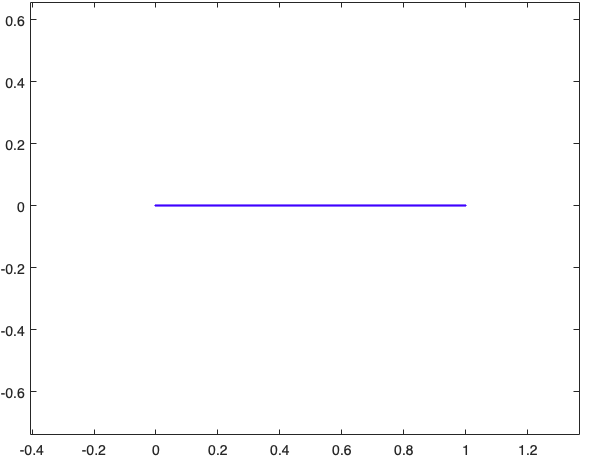}
\includegraphics[scale=0.3]{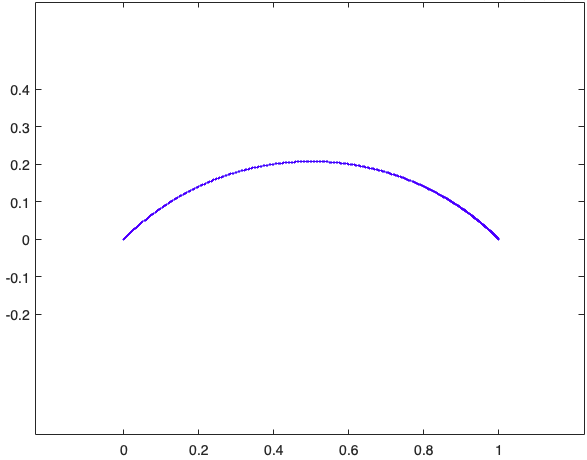}
\caption{$B(C_\varphi)$ on $H^2$ for $\zeta = -1$ (left, apparently convex) and $\zeta = i\pi/4$ (right, apparently not convex).}
\end{figure}
\begin{theorem}\label{elliptic}
Let $\zeta \in \T$ and $\varphi(z) = \zeta z$. Then the Berezin range of $C_\varphi$ acting on $H^2$ is convex if and only if $\zeta = 1$ or $\zeta = -1$. 
\end{theorem}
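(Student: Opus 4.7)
The plan is to exploit the fact that the Berezin symbol $\tilde{C}_\varphi(z) = (1-|z|^2)/(1-|z|^2 \zeta)$ depends on $z$ only through the real parameter $r := |z|^2 \in [0, 1)$. So $B(C_\varphi) = f([0, 1))$, where $f(w) := (1-w)/(1-\zeta w)$. The case $\zeta = 1$ is immediate: $f \equiv 1$ on its domain, so $B(C_\varphi) = \{1\}$ is trivially convex. I would dispatch this as a preliminary observation and then assume $\zeta \neq 1$ for the rest of the argument.

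For $\zeta \neq 1$, the map $f$ is a genuine M\"{o}bius transformation of the Riemann sphere (its coefficient determinant $\zeta - 1$ is nonzero). By the standard fact that M\"{o}bius maps send lines and circles to lines and circles, $f(\R)$ is either a line in $\C$ or an honest circle in $\C$. Since $f$ is injective and $[0, 1)$ is a nondegenerate interval, $f([0, 1))$ is a connected, non-degenerate subset of $f(\R)$, so it is either a (possibly half-open) segment on a line or a proper arc of a circle. In the first case it is convex, because any connected subset of a line is an interval. In the second case it is not convex: the Euclidean chord between two distinct points on a circle meets the circle only at its endpoints, so interior points of the chord lie off the circle and hence off the arc.

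It remains to decide when $f(\R)$ is a line. A M\"{o}bius transformation sends $\R \cup \{\infty\}$ to a line (equivalently, to a ``circle through $\infty$'') if and only if its pole lies on $\R \cup \{\infty\}$. The pole of $f$ is $1/\zeta = \bar{\zeta}$, which lies in $\R$ if and only if $\zeta \in \R \cap \T = \{1, -1\}$. Combined with the already-handled case $\zeta = 1$, this yields the claimed equivalence; note that for a concrete sanity check in the surviving case $\zeta = -1$, one has $f(r) = (1-r)/(1+r)$, mapping $[0, 1)$ bijectively onto $(0, 1]$.

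The only step that requires more than a line or two is the assertion that a non-degenerate arc of a genuine circle is never convex, which I would justify by the elementary chord argument above. Everything else is either a direct computation from the given Berezin-symbol formula or a standard M\"{o}bius-transformation fact, so I expect no serious obstacle beyond setting up the M\"{o}bius/line-or-circle dichotomy cleanly.
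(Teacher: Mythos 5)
Your proof is correct, and it takes a genuinely different route from the paper's. The paper handles the backward direction by direct computation ($\zeta=1$ gives $\{1\}$, $\zeta=-1$ gives $(0,1]$), and for the forward direction argues that since $\tilde{C}_\varphi$ depends only on $r=|z|$, the range is a path in $\C$, that a convex path must be a point or a line segment, and then uses the endpoint data $\tilde{C}_\varphi(0)=1$ and $\lim_{r\to1^-}\tilde{C}_\varphi=0$ to force that segment onto the real axis, whence $\Im\{\zeta\}=0$. You instead recognize $r^2\mapsto(1-r^2)/(1-\zeta r^2)$ as a M\"obius map of the extended reals and invoke the circle/line dichotomy: the range is a nondegenerate proper sub-arc of the circline $f(\widehat{\R})$, such an arc is convex precisely when the circline is a line, and that happens precisely when the pole $1/\zeta=\bar{\zeta}$ is real, i.e.\ $\zeta=\pm1$. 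What each buys: the paper's argument is elementary and self-contained but leans on the slightly under-justified claim that a convex path is a point or segment (true here because the image is a one-dimensional analytic arc, hence has empty interior, but the paper does not say so); your version replaces that step with the standard M\"obius circle-preservation theorem plus the clean chord argument that a nondegenerate circular arc is never convex, giving a tighter logical structure and, as a bonus, a transparent criterion (real pole) for when convexity occurs. Both correctly isolate the degenerate case $\zeta=1$, and your sanity check $f(r)=(1-r)/(1+r)$ for $\zeta=-1$ matches the paper's explicit computation of the range $(0,1]$.
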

\begin{proof}
Let us prove the backward implication first. Suppose first that $\zeta = 1$ so $\varphi(z) = z$. Putting $z= re^{i\theta}$ with $0 \le r < 1$, the calculations above show that
\[
\tilde{C}_\varphi(re^{i\theta}) = \frac{1 - r^2}{1 - r^2} = 1 
\]
so $B(C_\varphi) = \{1\}$, which is convex. 
Similarly, for $\varphi(z) = -z$, we obtain
\[
B(C_\varphi) =\left\{ \frac{1 - r^2}{1 + r^2} : r \in [0,1) \right\} = (0,1],
\]
which is also convex. 

Conversely, suppose that $B(C_\varphi)$. We have that 
\[
\tilde{C}_\varphi (re^{i\theta}) =  \frac{1 - r^2}{1 + r^2\zeta},
\]
which is a function independent of $\theta$. Hence, by definition, $B(C_\varphi)$ is just a path in $\C$. By convexity, $B(C_\varphi)$ must then be either a point or a line segment. It is immediate that $B(C_\varphi)$ is a point if and only if $\zeta = 1$, so let us assume $B(C_\varphi)$ is a line segment. Note that $\tilde{C}_\varphi(0) = 1$ and that $\lim_{r\to 1^-}\tilde{C}_\varphi (re^{i\theta}) = 0$. This tells us that $B(C_\varphi)$ must be a line segment passing through the point 1 and approaching the origin. Consequently, we must have $\Im \left\{B(C_\varphi)\right\} = \{0\}$, which can happen if and only if $\Im \{ \zeta \} = 0$. Thus, as $\zeta \in \T$, we have $\zeta = 1$ or $\zeta = -1$, the former of which was handled. 
\end{proof}
This theorem characterizes the convexity of the Berezin transform for composition operators with the elliptic automorphisms, which belong to a wider class of composition operators with automorphic symbols. We turn to characterize the convexity of the Berezin range of another such class.

\subsection{Composition Operators with Blaschke Factor Symbol}
For $\alpha \in \D$, consider the automorphism of the unit disk (known as a Blaschke factor)
\[
\varphi_{\alpha}(z):=\frac{z - \alpha}{1-\overline{\alpha}z}
\]
and the composition operator
\[
C_{\varphi_\alpha}f = f \circ \varphi_\alpha. 
\]
Acting on $H^2$, we have
\begin{align*}
\tilde{C}_{\varphi_\alpha}(z) &= \langle C_{\varphi_\alpha} \hat{k}_z, \hat{k}_z \rangle \\
&= \left(1 - |z|^2\right)\langle C_{\varphi_\alpha} k_z, k_z \rangle\\
& = \left(1 - |z|^2\right) k_z(\varphi_\alpha(z))\\
&= \frac{1 - |z|^2}{1 - \overline{z}\varphi_\alpha(z)}.
\end{align*}

With the aid of a computer, we can plot an example: 
\begin{figure}[H]
\includegraphics[scale=0.5]{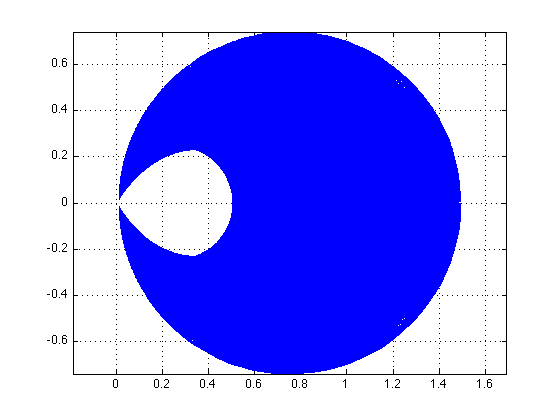}
\caption{$B(C_{\varphi_\alpha})$ on $H^2$ for $\alpha = -1/2$}
\end{figure}

In the above example, the Berezin range is more geometrically interesting than in the elliptic case, and is clearly not convex(!).
We will ultimately give a characterization for convexity of the Berezin range in this case, but require some calculations and observations first.

\begin{lemma}\label{real_imag_parts}
On $H^2$, the real and imaginary parts of $\tilde{C}_{\varphi_\alpha}$ are given by
\[
\Re\left\{\tilde{C}_{\varphi_\alpha}(z)\right\} = c_{\alpha,z} \left(1 - |z|^2\right)\left(1-\Re\{\overline{\alpha} z\}) + 2 (\Im\{\overline{\alpha}z\})^2 \right)
\]
and
\[
\Im\left\{\tilde{C}_{\varphi_\alpha}(z)\right\} =  c_{\alpha,z} \Im\{\overline{\alpha}z\} \left(1+|z|^2 - 2\Re\{\overline{\alpha} z\} \right),
\]
where
\[
c_{\alpha,z} = \frac{1 - |z|^2}{\left|1 - |z|^2 + 2i\Im\{\alpha\overline{z}\}\right|^2}.
\]
\end{lemma}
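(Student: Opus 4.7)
The plan is direct algebraic manipulation of the closed form $\tilde{C}_{\varphi_\alpha}(z) = (1-|z|^2)/(1-\overline{z}\varphi_\alpha(z))$ derived just before the lemma statement. The goal is to rationalize so the denominator becomes real, then split the resulting numerator into its real and imaginary pieces.

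First I would simplify $1 - \overline{z}\varphi_\alpha(z)$ by substituting $\varphi_\alpha(z) = (z-\alpha)/(1-\overline{\alpha}z)$ and combining over a common denominator. Using the identity $\overline{\alpha}z - \alpha\overline{z} = 2i\,\Im\{\overline{\alpha}z\}$, the numerator of the resulting fraction collapses to $1-|z|^2 - 2i\,\Im\{\overline{\alpha}z\}$, yielding
\[
\tilde{C}_{\varphi_\alpha}(z) = \frac{(1-|z|^2)(1-\overline{\alpha}z)}{1-|z|^2 - 2i\,\Im\{\overline{\alpha}z\}}.
\]

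Next, I would multiply top and bottom by the conjugate of the denominator, namely $(1-|z|^2) + 2i\,\Im\{\overline{\alpha}z\}$. The new denominator becomes $\bigl|(1-|z|^2) - 2i\,\Im\{\overline{\alpha}z\}\bigr|^2$, which matches the denominator of $c_{\alpha,z}$ (noting $\Im\{\alpha\overline{z}\} = -\Im\{\overline{\alpha}z\}$, so the modulus is unchanged). One factor of $1-|z|^2$ from the numerator then pairs with this modulus to produce $c_{\alpha,z}$ as a common prefactor.

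Finally, I would expand the remaining product $(1-\overline{\alpha}z)\bigl[(1-|z|^2) + 2i\,\Im\{\overline{\alpha}z\}\bigr]$ by writing $\overline{\alpha}z = \Re\{\overline{\alpha}z\} + i\,\Im\{\overline{\alpha}z\}$ and distributing. Separating real and imaginary contributions, the real part collects to $(1-|z|^2)(1-\Re\{\overline{\alpha}z\}) + 2(\Im\{\overline{\alpha}z\})^2$, and the imaginary part to $\Im\{\overline{\alpha}z\}\,(1+|z|^2 - 2\Re\{\overline{\alpha}z\})$, producing exactly the claimed formulas once the $c_{\alpha,z}$ factor is reinstated. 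No real obstacle arises here; the only care required is in tracking the sign convention $\Im\{\alpha\overline{z}\} = -\Im\{\overline{\alpha}z\}$ when identifying the denominator with that of $c_{\alpha,z}$, and in bookkeeping during the final expansion.
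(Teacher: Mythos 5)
Your proposal is correct and follows essentially the same route as the paper: clear the denominator of $1-\overline{z}\varphi_\alpha(z)$ to get $(1-|z|^2)(1-\overline{\alpha}z)$ over $1-|z|^2+2i\,\Im\{\alpha\overline{z}\}$, rationalize by the conjugate so that $c_{\alpha,z}$ emerges as a real prefactor, and expand $(1-\overline{\alpha}z)$ in terms of $\Re\{\overline{\alpha}z\}$ and $\Im\{\overline{\alpha}z\}$ to separate the real and imaginary parts. The sign bookkeeping via $\Im\{\alpha\overline{z}\}=-\Im\{\overline{\alpha}z\}$ is handled correctly and matches the paper's computation.
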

\begin{proof}
Let us make some computations: 
\begin{align*} 
\tilde{C}_{\varphi_\alpha}(z) &= \frac{1 - |z|^2}{1 - \overline{z}\varphi_\alpha(z)}\\
&= \frac{(1 - |z|^2)(1 - \overline{\alpha} z)}{1 - \overline{\alpha} z - \overline{z}(z - \alpha)}\\
&= \frac{(1 - |z|^2)(1 - \overline{\alpha} z)}{1 - |z|^2 + 2i\Im\{\alpha\overline{z}\}}.
\end{align*}
Multiplying by a complex conjugate in the denominator, we have 
\begin{align*}
\tilde{C}_{\varphi_\alpha}(z) &= c_{\alpha,z} \left(1 - \overline{\alpha} z\right)\left(1 - |z|^2 - 2i\Im\{\alpha\overline{z}\}\right) \\
&= c_{\alpha,z} \left(1 - |z|^2 + 2i\Im\{\overline{\alpha}z\} - \overline{\alpha}z(1-|z|^2) +2i \Im\{\alpha\overline{z}\}\overline{\alpha} z\right) \\
&= c_{\alpha,z} \big[1 - |z|^2 + 2i\Im\{\overline{\alpha}z\} - \left(\Re\{\overline{\alpha} z\} + i\Im\{\overline{\alpha} z\}\right)(1-|z|^2) \\
	&\hspace{1cm}-2i \Im\{\overline{\alpha}z\}(\Re\{\overline{\alpha} z\} + i\Im\{\overline{\alpha} z\})\big] \\
&= c_{\alpha,z} \big[1 - |z|^2 + 2i\Im\{\overline{\alpha}z\} - (1-|z|^2)\Re\{\overline{\alpha} z\} - i(1-|z|^2)\Im\{\overline{\alpha} z\} \\
	&\hspace{1cm}-2i \Im\{\overline{\alpha}z\}\Re\{\overline{\alpha} z\} + 2 (\Im\{\overline{\alpha}z\})^2 \big] 
\end{align*}
Combining real and imaginary terms, then simplifying, gives
\begin{align*}
\tilde{C}_{\varphi_\alpha}(z)&= c_{\alpha,z} \left(1 - |z|^2  - (1-|z|^2)\Re\{\overline{\alpha} z\} + 2 (\Im\{\overline{\alpha}z\})^2 \right)\\
	&\hspace{1cm} + ic_{\alpha,z} \left(2\Im\{\overline{\alpha}z\} - (1-|z|^2)\Im\{\overline{\alpha} z\} -2 \Im\{\overline{\alpha}z\}\Re\{\overline{\alpha} z\} \right) \\
&= c_{\alpha,z} \left((1 - |z|^2)(1-\Re\{\overline{\alpha} z\}) + 2 (\Im\{\overline{\alpha}z\})^2 \right)\\
	&\hspace{1cm}+ ic_{\alpha,z} \Im\{\overline{\alpha}z\} \left(1+|z|^2 - 2\Re\{\overline{\alpha} z\} \right).
\end{align*}
Noting that $c_{\alpha,z} \in \R$ gives the result.
\end{proof}

We can use this information to gather some facts about the geometry of $B(C_{\varphi_\alpha})$. 
\begin{prop}\label{symmetric}
The Berezin range of $C_{\varphi_\alpha}$ on $H^2$ is closed under complex conjugation, and therefore symmetric about the real line. 
\end{prop}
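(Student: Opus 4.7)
The plan is to exhibit an explicit involution $\sigma : \D \to \D$ with the property that $\tilde{C}_{\varphi_\alpha}(\sigma(z)) = \overline{\tilde{C}_{\varphi_\alpha}(z)}$ for every $z \in \D$. Once such a $\sigma$ is in hand, the proposition is immediate: for any $w = \tilde{C}_{\varphi_\alpha}(z)\in B(C_{\varphi_\alpha})$, the point $\overline{w} = \tilde{C}_{\varphi_\alpha}(\sigma(z))$ also lies in $B(C_{\varphi_\alpha})$, so the range is closed under conjugation and therefore symmetric about $\R$.

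The case $\alpha = 0$ is trivial, since $\varphi_0 = \operatorname{id}$ makes $\tilde{C}_{\varphi_0}\equiv 1$, which is already real. For $\alpha \neq 0$, the naive choice $\sigma(z) = \overline{z}$ does not work because it fails to send $\overline{\alpha}z$ to its complex conjugate. The correct fix is to twist by the unimodular factor $\alpha/\overline{\alpha}$ and set
\[
\sigma(z) := \frac{\alpha}{\overline{\alpha}}\,\overline{z}.
\]
Since $|\alpha/\overline{\alpha}| = 1$, this is an anti-holomorphic involution of $\D$, and it satisfies $\overline{\alpha}\,\sigma(z) = \alpha\overline{z} = \overline{\overline{\alpha}z}$. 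Consequently, under $z \mapsto \sigma(z)$ the three quantities $|z|^2$, $\Re\{\overline{\alpha}z\}$, and $(\Im\{\overline{\alpha}z\})^2$ are preserved, while $\Im\{\overline{\alpha}z\}$ changes sign.

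I would then plug $\sigma(z)$ into the formulas of Lemma \ref{real_imag_parts} and record the following: (i) the scalar $c_{\alpha,z}$ depends only on $|z|^2$ and $(\Im\{\overline{\alpha}z\})^2$, since $|\Im\{\alpha\overline{z}\}|^2 = (\Im\{\overline{\alpha}z\})^2$, so $c_{\alpha,\sigma(z)} = c_{\alpha,z}$; (ii) the bracketed expression for $\Re\tilde{C}_{\varphi_\alpha}(z)$ uses $\Im\{\overline{\alpha}z\}$ only through its square, so it is $\sigma$-invariant; and (iii) the expression for $\Im\tilde{C}_{\varphi_\alpha}(z)$ contains a single factor of $\Im\{\overline{\alpha}z\}$ outside any square, so it flips sign under $\sigma$. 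These three observations together yield $\tilde{C}_{\varphi_\alpha}(\sigma(z)) = \overline{\tilde{C}_{\varphi_\alpha}(z)}$.

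There is no real obstacle; the only step that requires any thought is discovering the correct $\sigma$. A conceptual sanity check is that $\varphi_\alpha \circ \sigma = \sigma \circ \varphi_\alpha$, as a short direct computation with the Blaschke factor confirms, so $\sigma$ is precisely the symmetry of $\D$ that commutes with $\varphi_\alpha$ and swaps the two half-planes determined by the diameter through $\alpha$. Once that symmetry is spotted, the remainder is a mechanical verification against the formulas already established.
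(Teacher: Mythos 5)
Your proof is correct and uses essentially the same idea as the paper: your involution $\sigma(z) = (\alpha/\overline{\alpha})\,\overline{z}$ is exactly the paper's reflection $re^{i\theta}\mapsto re^{i(2\psi-\theta)}$ (with $\alpha=\rho e^{i\psi}$) across the diameter through $\alpha$, written in Cartesian rather than polar form. The only difference is the verification: you check the identity against the real and imaginary parts from Lemma \ref{real_imag_parts}, while the paper verifies $e^{2i\psi}\,\overline{\varphi_\alpha(\sigma(z))}=\varphi_\alpha(z)$ directly from the formula $\tilde{C}_{\varphi_\alpha}(z)=(1-|z|^2)/(1-\overline{z}\varphi_\alpha(z))$; both routes are routine and complete.
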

\begin{proof}
Put $z = re^{i\theta}$ and $\alpha = \rho e^{i\psi }$. We claim that $\tilde{C}_{\varphi_\alpha}\left(re^{i\theta}\right) = \overline{\tilde{C}_{\varphi_\alpha}(re^{i(2\psi - \theta)})}$. 
This is the case if and only if $re^{i(2\psi - \theta)} \overline{\varphi_\alpha(re^{i(2\psi - \theta)})} = re^{-i\theta}\varphi_\alpha(re^{i\theta})$ or, equivalently, if and only if $e^{i2\psi} \overline{\varphi_\alpha(re^{i(2\psi - \theta)})} = \varphi_\alpha(re^{i\theta})$.
So let us compute: 
\begin{align*}
e^{2\psi i} \overline{\varphi_\alpha(re^{i(2\psi - \theta)})} &= e^{i2\psi} \ \frac{re^{i(\theta - 2\psi)} - \rho e^{-i\psi}}{1 - \rho e^{i\psi}re^{i(\theta - 2\psi)}} \\
& = \frac{re^{i\theta} - \rho e^{i\psi}}{1 - \rho e^{-i\psi}re^{i\theta}}\\
&= \varphi_\alpha(re^{i\theta}).
\end{align*}
\end{proof}

We point out a corollary of this result that will be important in establishing the characterization of convexity. 

\begin{cor}
If the Berezin range of $C_{\varphi_\alpha}$ on $H^2$ is convex, then $\Re\left\{\tilde{C}_{\varphi_\alpha}(z)\right\} \in B(\tilde{C}_{\varphi_\alpha})$ for each $z \in \D$.
\end{cor}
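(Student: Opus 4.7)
The plan is to exploit Proposition \ref{symmetric} directly: symmetry of $B(C_{\varphi_\alpha})$ under complex conjugation, combined with convexity, forces the real part of any element to lie in the set, since the real part is the midpoint of a complex number and its conjugate.

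Concretely, I would fix an arbitrary $z \in \D$ and set $w = \tilde{C}_{\varphi_\alpha}(z)$, so $w \in B(C_{\varphi_\alpha})$ by definition. Proposition \ref{symmetric} tells us that $\overline{w} \in B(C_{\varphi_\alpha})$ as well (since the Berezin range is closed under complex conjugation). Assuming $B(C_{\varphi_\alpha})$ is convex, the line segment from $w$ to $\overline{w}$ lies in $B(C_{\varphi_\alpha})$; in particular, its midpoint
\[
\frac{w + \overline{w}}{2} = \Re\{w\} = \Re\bigl\{\tilde{C}_{\varphi_\alpha}(z)\bigr\}
\]
is an element of $B(C_{\varphi_\alpha})$. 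Since $z \in \D$ was arbitrary, the conclusion follows.

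There is essentially no obstacle here; the corollary is just the two-line consequence of symmetry plus convexity, and the only ingredients needed are the preceding proposition and the definition of convexity.
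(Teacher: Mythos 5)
Your proof is correct and is essentially identical to the paper's: both take the midpoint of $\tilde{C}_{\varphi_\alpha}(z)$ and its conjugate (which lies in the range by Proposition \ref{symmetric}) and invoke convexity. Nothing further is needed.
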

\begin{proof}
Suppose $B(\tilde{C}_{\varphi_\alpha})$ is convex. Then since $B(C_{\varphi_\alpha})$ is closed under complex conjugation (by Proposition \ref{symmetric}), we have 
\[
\frac12\tilde{C}_{\varphi_\alpha}(z) + \frac12\overline{\tilde{C}_{\varphi_\alpha}(z)} = \Re\left\{\tilde{C}_{\varphi_\alpha}(z)\right\}  \in B(\tilde{C}_{\varphi_\alpha}).
\]
\end{proof}

We these tools in hand, we can provide a characterization of convexity. 
\begin{theorem}\label{Blaschke}
The Berezin range of $C_{\varphi_\alpha}$ on $H^2$ is convex if and only if $\alpha = 0$. 
\end{theorem}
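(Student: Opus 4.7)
The forward direction is immediate: if $\alpha = 0$, then $\varphi_\alpha(z) = z$, so $C_{\varphi_\alpha}$ is the identity, $\tilde{C}_{\varphi_\alpha} \equiv 1$, and $B(C_{\varphi_\alpha}) = \{1\}$ is trivially convex.

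For the converse, suppose $\alpha \neq 0$; the plan is to assume $B(C_{\varphi_\alpha})$ is convex and derive a contradiction using the preceding corollary, which forces $\Re\{\tilde{C}_{\varphi_\alpha}(z)\} \in B(C_{\varphi_\alpha})$ for every $z \in \D$. A preliminary reduction: writing $\alpha = \rho e^{i\psi}$, a direct calculation gives $\varphi_\alpha(we^{i\psi}) = e^{i\psi}\varphi_\rho(w)$ and hence $\tilde{C}_{\varphi_\alpha}(we^{i\psi}) = \tilde{C}_{\varphi_\rho}(w)$, so the Berezin range depends only on $\rho = |\alpha|$ and I may assume $\alpha = \rho \in (0,1)$.

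The crux is to identify $B(C_{\varphi_\alpha}) \cap \R$ exactly. From Lemma \ref{real_imag_parts}, the imaginary part is a product of three real factors; the factor $1 + |z|^2 - 2\Re\{\overline{\alpha}z\}$ is bounded below by $(1 - \rho|z|)^2 + |z|^2(1-\rho^2)$, which is strictly positive when $\rho < 1$, and $c_{\alpha,z}$ is likewise nonvanishing on $\D$. Therefore $\Im\{\tilde{C}_{\varphi_\alpha}(z)\} = 0$ if and only if $\Im\{\overline{\alpha}z\} = 0$, i.e., (since $\alpha$ is now real) if and only if $z \in \R$. Plugging $z = r \in (-1, 1)$ into Lemma \ref{real_imag_parts} collapses the formula to $\tilde{C}_{\varphi_\alpha}(r) = 1 - \rho r$, so
\[
B(C_{\varphi_\alpha}) \cap \R = (1-\rho,\, 1+\rho).
\]

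To finish, I would test $z_0 = ir$ for $r \in (0,1)$. Lemma \ref{real_imag_parts} then gives
\[
\Re\bigl\{\tilde{C}_{\varphi_\alpha}(ir)\bigr\} = \frac{(1-r^2)\bigl[(1-r^2) + 2\rho^2 r^2\bigr]}{(1-r^2)^2 + 4\rho^2 r^2},
\]
and as $r \to 1^-$ the numerator vanishes while the denominator tends to $4\rho^2 > 0$, so the expression tends to $0$. Since $\rho < 1$ gives $1-\rho > 0$, choosing $r$ close enough to $1$ forces $\Re\{\tilde{C}_{\varphi_\alpha}(ir)\} < 1 - \rho$; this real number lies outside $B(C_{\varphi_\alpha}) \cap \R$, contradicting the corollary and hence convexity. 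The main obstacle is the middle step: rigorously pinning down $B(C_{\varphi_\alpha}) \cap \R$ as precisely the interval $(1-\rho, 1+\rho)$ requires the positivity bound on $1 + |z|^2 - 2\Re\{\overline{\alpha}z\}$ to rule out any ``stray'' real values of $\tilde{C}_{\varphi_\alpha}$ arising from the second factor of the imaginary part.
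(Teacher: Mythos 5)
Your proof is correct and follows essentially the same route as the paper: invoke the conjugation-symmetry corollary, use the factorization of $\Im\{\tilde{C}_{\varphi_\alpha}\}$ to pin down the real slice of the Berezin range as $(1-|\alpha|,\,1+|\alpha|)$, and then contradict this with a point whose real part decays to $0$ near the boundary. The only (cosmetic) differences are your preliminary rotation reducing to real $\alpha$ and your use of the explicit points $z=ir$ in place of the paper's general radial limit $\lim_{\rho\to 1^-}\tilde{C}_{\varphi_\alpha}(\rho e^{i\theta})=0$.
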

\begin{proof}
If $\alpha = 0$, then $B(C_{\varphi_\alpha}) = \{ 1\}$, which is convex.
Conversely, suppose $B(\tilde{C}_{\varphi_\alpha})$ is convex. Then since $B(C_{\varphi_\alpha})$ is closed under complex conjugation, we have 
\[
\frac12\tilde{C}_{\varphi_\alpha}(z) + \frac12\overline{\tilde{C}_{\varphi_\alpha}(z)} = \Re\left\{\tilde{C}_{\varphi_\alpha}(z)\right\}  \in B(\tilde{C}_{\varphi_\alpha}).
\]
Accordingly, for each $z \in \D$, we can find $w\in \D$ such that 
\[
\tilde{C}_{\varphi_\alpha}(w) = \Re\left\{\tilde{C}_{\varphi_\alpha}(z)\right\}.
\]
In turn, $\Im\left\{ \tilde{C}_{\varphi_\alpha}(w)\right\} = c_{\alpha, w} \Im\{\overline{\alpha}w\} \left(1+|w|^2 - 2\Re\{\overline{\alpha} w\} \right)= 0$, where $c_{\alpha, w}$ is defined as in Lemma \ref{real_imag_parts}.
But because $c_{\alpha, w} > 0$ and $\left(1+|w|^2 - 2\Re(\overline{\alpha} w) \right)>0$ for any $\alpha, w \in \D$, we have $\Im\left\{\tilde{C}_{\varphi_\alpha}(w)\right\} = 0$ if and only if $\Im\{\overline{\alpha}w\} = 0$. 
This says that $\alpha$ and $w$ lie on a line passing through the origin. So we can put $w = r \alpha$ for some $r \in \left(-1/|\alpha|, 1/|\alpha|\right)$.
Now we have 
\begin{align*}
\tilde{C}_{\varphi_\alpha}(w) &= \Re\left\{ \tilde{C}_{\varphi_\alpha}(r\alpha)\right\} \\
&= \frac{(1 - |r\alpha|^2)}{|1 - |r\alpha|^2 + 2i\Im(\alpha\overline{r\alpha})|^2} \left((1 - |r\alpha|^2)(1-\Re\{\overline{\alpha} r\alpha\}) + 2 (\Im\{\overline{\alpha}r\alpha\})^2 \right)\\[.3cm]
&= 1- r|\alpha|^2.
\end{align*}\\
Consequently,  $\left\{\tilde{C}_{\varphi_\alpha}(r\alpha) : r \in \left(-1/|\alpha|, 1/|\alpha|\right) \right\} = (1 - |\alpha|, 1 + |\alpha|)$.
However, putting $z = \rho e^{i\theta}$, an elementary exercise shows that
\[
\lim_{\rho \to 1^-} \tilde{C}_{\varphi_\alpha}(\rho e^{i\theta}) = 
\begin{cases} 
      0, & \alpha \neq 0 \\
      1, & \alpha = 0
\end{cases}
\]
This tells us that when $\alpha \neq 0$, given $\epsilon$ with $0 < \epsilon < 1 - |\alpha|$, there exists a point $z$ such that $\left|\Re\left\{\tilde{C}_{\varphi_\alpha}(z)\right\}\right| < \epsilon$. But if $\tilde{C}_{\varphi_\alpha}(w) = \Re\left\{\tilde{C}_{\varphi_\alpha}(z)\right\}$, this is a contradiction since $\tilde{C}_{\varphi_\alpha}(w) \in (1 - |\alpha|, 1 + |\alpha|)$. Thus, $B(\tilde{C}_{\varphi_\alpha})$ cannot be convex unless $\alpha = 0$. 
\end{proof}

\section{Other Examples and Further Directions}\label{questions}
We conclude with a few questions and remarks, motivated by some examples. 
We start with questions that naturally follow the results regarding composition operators on $H^2$ in the previous section.  
These operators have symbols which are automorphisms of the disk, taking the general form
\[
b(z) = \zeta \frac{z - \alpha}{1 - \overline{\alpha}z},
\]
where $\zeta \in \T$ and $\alpha \in \D$. 
\begin{question}
In $H^2$, can one characterize the convexity of the Berezin range for the composition operator with symbol $b$, with $b$ defined as above, by combining the results in Theorems \ref{elliptic} and \ref{Blaschke}? 
\end{question}

Even more generally, the examples in Section \ref{comp_ops} belong to the class of composition operators having symbols known as M\"obius transformations 
\[
M(z) = \frac{az+b}{cz+d},
\]
where $a,b,c,d \in \C$ and $ad-bc \neq 0$.
Another natural step would be to consider the class of operators $C_M$ acting on $H^2$. 
\begin{question}
Given the composition operator $C_M$ acting on $H^2$, with $M$ defined as above, what are necessary and sufficient conditions for $B(C_M)$ to be convex? 
\end{question}

\begin{figure}[H]\label{moebius}
\includegraphics[width=.82\textwidth]{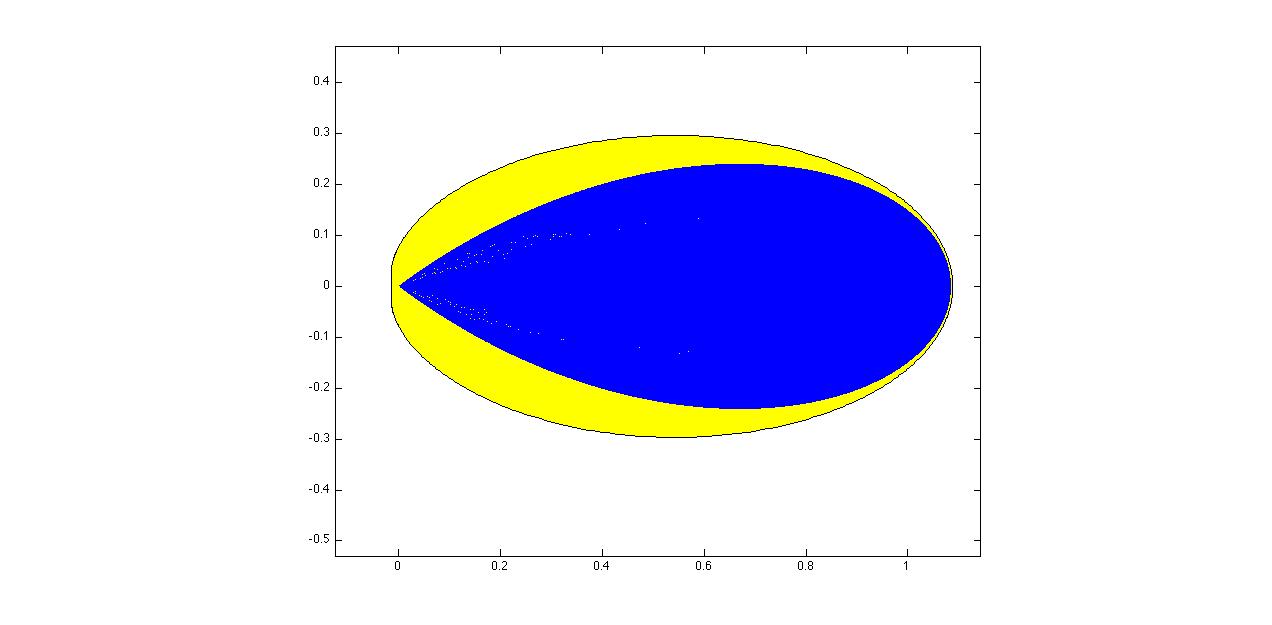}
\caption{$W(C_M)$ and $B(C_M)$ on $H^2$ for $M(z) = \frac{4 + 2z}{9 - z}$, which appears to be convex.}
\end{figure}
The Berezin range in the above figure seems to be symmetric about the real line, as was also seen in Figure 3 and in the first plot in Figure 2, but not in the second plot of Figure 2. This begs a question:
\begin{question}
For a composition operator $C_\varphi$ acting on $H^2$, when is $B(C_\varphi)$ symmetric about the real line? 
\end{question}
Recall that symmetry described in Proposition \ref{symmetric} was a key part in proving Theorem \ref{Blaschke}. 
It has also been of interest to understand when the numerical range of an operator can be a disk or an ellipse (cf. the Elliptical Range Theorem), and to deduce other properties concerning the circular symmetry of the numerical range (see \cite{MR2826148, MR3005640, MR1971096}). 
Similar questions can be asked of the Berezin range: 
\begin{question}
Given a bounded operator $T$ on $\h$, when is $B(T)$ an ellipse or circular disk? 
\end{question}
In fact, Karaev showed the Berezin range of multiplication by $z^n$ acting on a certain model space is a disk \cite[Example~2.1(a)]{MR3079840}.
The following example points toward the possibility that the Berezin range can be a \textit{circular disk} for other operators. 
\begin{figure}[H]
\includegraphics[width=.82\textwidth]{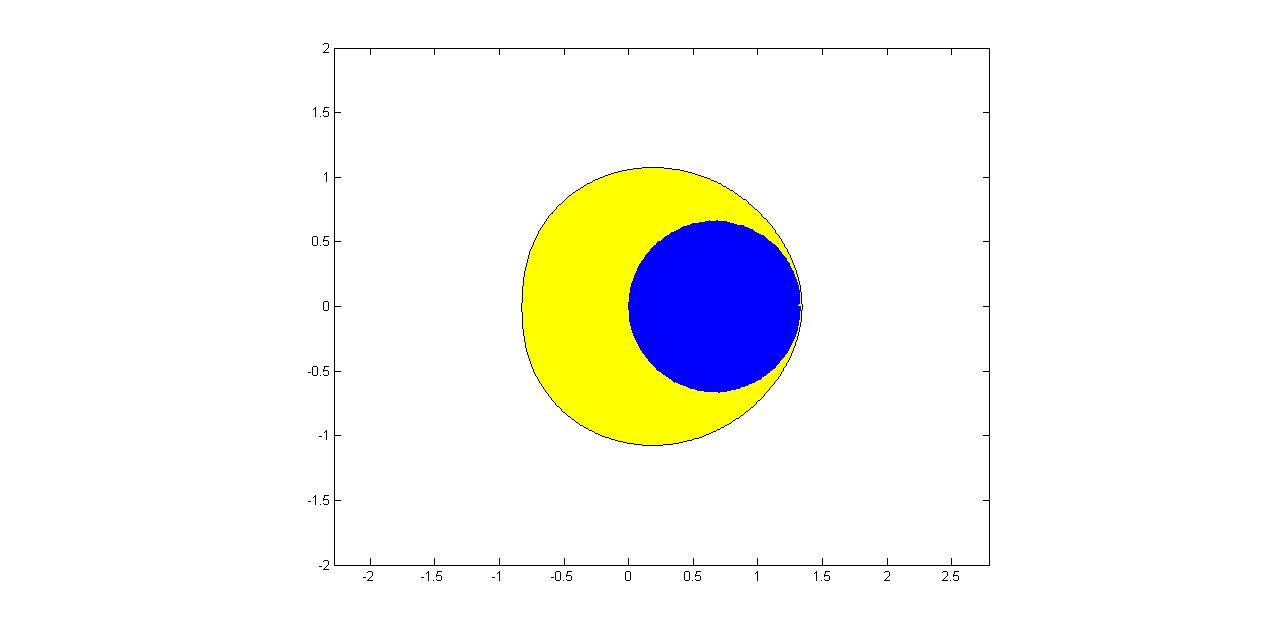}
\caption{$W(C_\varphi)$ and $B(C_\varphi)$ on $H^2$ for $\varphi(z) = \frac{1 + z}{2}$.}
\end{figure}

In light of the Axler-Zheng Theorem and its descendants, pointed out in Section \ref{background}, the Berezin transform is arguably most useful in the Bergman space, and, admittedly, the Berezin range may be more natural to consider in this setting. 
\begin{question}
Given a class of concrete operators acting on the Bergman space, what can be said about the convexity of the Berezin range of these operators?  
\end{question}
Of the composition operator results presented here, the obstruction to providing immediate analogous results on the Bergman space is the increased complexity of the reproducing kernel, which is given by $k_w(z) = \frac{1}{(1 - \overline{w}z)^2}$. In general, one may replace the Bergman space in the question above with any RKHS of holomorphic functions.

Looking back to Theorem \ref{mult_op_con}, the multiplication operators considered were uncomplicated. In general, if $\h$ is a closed subspace of a Banach space $Y$, one may take a function $g \in \operatorname{Mult}(Y)$ and consider the generalized \textit{Toeplitz operator} $T_g$ on $\h$, given by 
\[
T_g f = P_\h gf,
\]
where $P_\h$ is the orthogonal projection from $Y$ onto $\h$.

\begin{question}
Given a (generalized) Toeplitz operator $T_g$ acting on an RKHS $\h$, what can be said about the convexity of $B(T_g)$?  
\end{question}
We point here to the case where $g(z) = z$ and $\h$ is a model space (see \cite{MR3309355} for background on model spaces). The Toeplitz operator in this case is known as a \textit{compression of the shift} and the numerical range of this operator has been studied in both one and two variables \cite{MR3764150, MR4061945}. In general, these types of operators are known as truncated Toeplitz operators, and it has been shown that the compactness of these operators can be characterized in terms of the vanishing boundary behavior of the Berezin transform \cite{MR2766252}.

On many RKHSs, it is well known that various properties (e.g. boundedness or compactness) of certain operators can be deduced from considering only the action of the operator on the set of normalized reproducing kernels. Results of this type are known as \textit{reproducing kernel theses}. The literature surrounding this idea is extensive, so we point to \cite{MR3250359} for introduction and further reading. Many of the results mentioned in Section \ref{background} can be likened to reproducing kernel theses. One may ask if such results exist between the Berezin and numerical ranges. 

\begin{question}
Given an operator $T$ on an RKHS $\h$, are there any properties of $W(T)$ that can deduced from $B(T)$? 
\end{question}
For example, can one relate the Berezin radius and the numerical radius of an operator? An elementary estimate gives $b(T) \le w(T)$. However, one might ask for a sharp constant $C$ (depending on $T$) so that $w(T) \le C b(T)$. In many of the examples we have presented, it seems that the quantities are equal. Can one characterize when this is the case? The upshot in proving an equality would be that the radius of the Berezin range is much easier to compute than the numerical radius. In the case of Toeplitz operators acting on $H^2$, and for some truncated Toeplitz operators, it is known that these quantities are equal \cite{MR3079840}. However, these quantities are not equal in general \cite[Example~2]{MR3079840}.  
On $H^2$, it is known that the numerical radius of an operator can be bounded above and below by the Berezin radius of certain conjugates of the operator \cite[Proposition~1]{MR3522132}. In this vein, we end by mentioning recent interest in establishing inequalities for the Berezin radius (e.g. see \cite{MR3535209, MR3744172, MR3723519, MR3938330}).

\subsection*{Acknowledgments}
The second author sends abundant thanks to Kelly Bickel, John McCarthy, and Jeff Norton for helpful discussion.

\bibliographystyle{abbrv}
\bibliography{masterbib}

\end{document}